\newcommand\junk[1]{}
\DeclareMathOperator{\codim}{codim}
\DeclareMathOperator\sign{sign}
\newcommand{\arxiv}[1]{\href{http://arxiv.org/abs/#1}{{\tt arXiv:#1}}}
\newcommand\lie[1]{{\mathfrak #1}}
\newcommand\iso{\mathrel{\cong}}
\newcommand\RR{\mathbb R}
\newcommand\tensor{{\otimes}}
\newcommand\calO{{\mathcal O}}
\newcommand\calD{{\mathcal D}}
\newcommand\calF{{\mathcal F}}
\newtheorem{Theorem}{Theorem}
\newtheorem{Lemma}{Lemma}
\newtheorem*{Corollary*}{Corollary}
\newtheorem*{Lemma*}{Lemma}
\newtheorem*{Theorem*}{Theorem}
\theoremstyle{remark}
\newtheorem{Example}{Example}
\newcommand\onto{\mathrel{\twoheadrightarrow}}
\newcommand\into{\mathrel{\hookrightarrow}}
\newcommand\Union{\bigcup}
\newcommand\PP{{\mathbb P}}
\newcommand\CC{{\mathbb C}}
\newcommand\CP{{\mathbb C \mathbb P}}
\newcommand\integers{{\mathbb Z}}
\newcommand\naturals{{\mathbb N}}
\theoremstyle{plain}
\theoremstyle{remark}
\renewenvironment{quotation}
{\list{}{
    \setlength\itemindent{0em}%
    \setlength\leftmargin{1.5em}
    \setlength\rightmargin{1.5em}
  }%
\item[]}
{\endlist}
\newcommand\defn[1]{{\bf #1}} 
\newcommand\actson{\circlearrowright}
\newcommand\Gm{{{\mathbb G}_m}}
\newcommand\ZZ{\integers}
\newcommand\NN{\naturals}
\font\co=lcircle10
\def\jr{\smash{\raise2pt\hbox{\co \rlap{\rlap{\char'005} \char'007}}
               \raise6pt\hbox{\rlap{\vrule height6.5pt}}
               \raise2pt\hbox{\rlap{\hskip4pt \vrule height0.4pt depth0pt
                width7.7pt}}}}
\def\je{\smash{\raise2pt\hbox{\co \rlap{\rlap{\char'005}
                \phantom{\char'007}}}\raise6pt\hbox{\rlap{\vrule height6pt}}}}
\def\+{\smash{\lower2pt\hbox{\rlap{\vrule height14pt}}
                \raise2pt\hbox{\rlap{\hskip-3pt \vrule height.4pt depth0pt
                width14.7pt}}}}
\def\textcross{\ \smash{\lower4pt\hbox{\rlap{\hskip4.15pt\vrule height14pt}}
                \raise2.8pt\hbox{\rlap{\hskip-3pt \vrule height.4pt depth0pt
                width14.7pt}}}\hskip12.7pt}
\def\textelbow{\ \hskip.1pt\smash{\raise2.8pt%
                \hbox{\co \hskip 4.15pt\rlap{\rlap{\char'005} \char'007}
                \lower6.8pt\rlap{\vrule height3.5pt}
                \raise3.6pt\rlap{\vrule height3.5pt}}
                \raise2.8pt\hbox{%
                  \rlap{\hskip-7.15pt \vrule height.4pt depth0pt width3.5pt}%
                  \rlap{\hskip4.05pt \vrule height.4pt depth0pt width3.5pt}}}
                \hskip8.7pt}
\tikzset{mynode/.style={circle,draw=black,fill=black,inner sep=1.8pt,outer sep=0pt}}
\tikzset{whitenode/.style={circle,draw=black,fill=white,inner sep=1.8pt,outer sep=0pt}}
\tikzset{edgelabel/.style={\mcol,inner sep=0pt}}
\tikzset{invlabel/.style={draw=black,text=black,circle,inner sep=0pt,minimum size=3mm}}
\begin{document}
\pagestyle{plain}

\title[The Duistermaat-Heckman formula and Chern-Schwartz-MacPherson classes]
{The Duistermaat-Heckman formula and\\ Chern-Schwartz-MacPherson classes}

\author{Allen Knutson}
\email{allenk@math.cornell.edu}
\date{\today}

\dedicatory{For Victor Guillemin, my friend, advisor, and inspiration}

\maketitle

\renewcommand\AA{{\mathbb A}}

\begin{abstract}
  Let $M$ be a smooth complex projective variety, bearing a
  K\"ahler symplectic form $\omega$ and a Hamiltonian action of a
  torus $T$, with finitely many fixed points $M^T$. One standard form of
  the Duistermaat-Heckman theorem gives a formula for $M$'s
  Duistermaat-Heckman measure $DH_T(M,\omega)$ as an alternating sum 
  of projections of cones, with overall direction determined by a
  Morse decomposition of $M$.

  Using Vi{\bf c}tor Ginzburg's construction of Chern-Schwartz-MacPherson
  classes, we show that these individual cone terms can themselves
  be interpreted as
  Duistermaat-Heckman measures of cycles in $T^*M$. (This has a
  similar goal to the symplectic cobordism approach of Vi{\bf k}tor
  Ginzburg, Guillemin, and Karshon.) Our approach also suggests
  extensions of the formula, including the Brianchon-Gram theorem.
\end{abstract}

{\Small
  \setcounter{tocdepth}{2}
  \tableofcontents
}

\section{The Duistermaat-Heckman formula
  (apr\'es \cite{GLS})}\label{sec:DH}

Except for a minor twist, the material in this section is by now
completely classical (no pun intended), and we include it largely to
fix notation. The minor twist will be the inclusion of a cycle in our manifold.
For more leisurely treatments we direct the reader to
\cite{GLS,HK}.

Given
\begin{itemize}
\item a complex projective manifold $M \subseteq \PP V$ where $V$ bears
  a fixed Hermitian form $\langle,\rangle$, hence
\item a symplectic form $\omega$ on $M$, 
  restricted from the Fubini-Study form on $\PP V$,
\item an algebraic cycle $C \subseteq M$, 
  i.e. a formal $\ZZ$-linear combination 
  $\sum_i n_i C_i$ of subvarieties $C_i \subseteq M$ of a fixed dimension, and 
\item an action $T \actson V$ of a compact torus $T$ preserving
  each of $M$, $C$ and $\langle,\rangle$, hence
\item a homomorphism $\rho:T \to U(V)$ and a moment map $\Phi:M \to \lie{t}^*$
  made by composing $M \into \PP V \into \lie{u}(V)^* 
  \xrightarrow{\rho^*} \lie{t}^*$, where $\lie{u}(V)^*,\lie{t}^*$ are the
  duals of the Lie algebras,
\end{itemize}
we have three ways to think about the \defn{Duistermaat-Heckman measure} 
$DH_T(C \subseteq M, \omega)$:
\begin{enumerate}
\item We can use $\omega$ to define a Liouville measure on the smooth part of
  each $C_i$, push those measures forward with $\Phi_*$ to $\lie{t}^*$, 
  and consider the sum of those measures, weighted by the coefficients $(n_i)$.
\item We can take the Fourier transform of $\int_M \exp(\tilde\omega) [C]$,\\
  where $\tilde\omega = \omega - \Phi$ is the equivariant extension of
  the symplectic form (as in \cite{AtiyahBott}). 
  Note that if $C$ is a smooth subvariety (formally
  given coefficient $1$), \\ then
  $\int_M \exp(\tilde\omega) [C] = \int_C \exp(\tilde\omega|_C)$ so
  $DH_T(C\subseteq M,\omega) = DH_T(C\subseteq C,\omega|_C)$.
\item (Following \cite{BrionProcesi} or \cite[\S 3.4]{GLS}) 
  We can compute the weight multiplicity diagrams of
  $\oplus_i \Gamma(C_i; \mathcal O(d))^{n_i}$ on $T$'s weight lattice
  $T^*$, divide the multiplicities by $d^{\dim C}$ and the lattice
  spacing by $d$, and consider the limit of the measure (now on
  $\lie{t}^*$) as $d\to \infty$.
\end{enumerate}

In \cite{AtiyahBott} the first two ways are related (in the case $C=M$, but
our generalization is an easy change) when $M^T$ is finite, 
computing the integral
$$ \int_M \exp(\tilde\omega) [C] 
=
\sum_{f\in M^T} 
\frac{\exp(\tilde\omega)|_f \ [C]|_f} {\prod_{\lambda \in wts(T_f M)} \lambda}
=
\sum_{f\in M^T} \exp(-\Phi(f)) 
\frac{ [C]|_f} {\prod_{\lambda \in wts(T_f M)} \lambda}
$$
where $wts(T_f M)$ denotes the set of weights (with multiplicity) in the
isotropy action of $T$ on the tangent space $T_f M$,
and $\alpha|_f$ denotes the pullback of a class $\alpha$ along the 
($T$-equivariant) inclusion of the point $f$.

For general $C$ of complex dimension $d$, we can (nonuniquely) write 
the restriction $[C]|_f$ of its equivariant cohomology class $[C]\in H^*_T(M)$
to the point $f$ as
$$  [C]|_f = \sum_{S \in {wts(T_f M) \choose d}} n_{f,S} 
\prod_{\lambda \in wts(T_f M)\setminus S} \lambda,\qquad \text{for some }n_{f,S} \in \ZZ 
$$ 
where $S$ runs over sub-multisets of $wts(T_f M)$ of size $d$. 
(Proof: pass to the formal neighborhood of $f$ in $M$, then apply
\cite[Lemma D]{grobGeom}, which is stated there over polynomial rings but 
applies without change to power series rings.) 

It becomes now very tempting to Fourier transform the sum term by term. 
As is well-understood, 
defining the transform of such singular terms requires some choices
in regularization, which we recapitulate in a moment. For now, we choose
$\vec v \in \lie t$ not perpendicular to any of the (finitely many)
weights in $\{T_f M\colon f\in M^T\}$, 
use it to define $ \lambda_+ := \sign(\langle \vec v,\lambda \rangle) \lambda $,
and flip the signs on some of our $(n_{f,S})$ so that
$$  \frac{[C]|_f}{\prod_{\lambda \in wts(T_f M)} \lambda}
= \sum_{S \in {wts(T_f M) \choose d}} n_{f,S}\ \frac{1}
{\prod_{\lambda \in S} \lambda_+}
$$ 
and having now rewritten the ``equivariant multiplicity'' of $[C]$
like so, we have
\begin{eqnarray*}
  \int_M \exp(\tilde\omega) [C] 
\junk{  &=& \sum_{f\in M^T} \exp(-\Phi(f)) 
      \sum_{S \in {wts(T_f M) \choose d}} n_{f,S}\ \frac{1}
{\prod_{\lambda \in S} \lambda_+} \\}
  &=& \sum_{f\in M^T,\ S \in {wts(T_f M) \choose d}} n_{f,S}\
      \frac{\exp(-\Phi(f))}{\prod_{\lambda \in S} \lambda_+}
\end{eqnarray*}

\subsection{Cone terms}

Define a \defn{cone term} associated to a weight $\mu \in T^*$
and a multiset $P = \{\lambda\}$ of weights in $T^*$
as any multiple of
$$ cone\left(\mu, \{\lambda\}\right) 
:= \pi_*(\text{Lebesgue measure on }{\RR_{\geq 0}}^P),
$$
where
\junk{$$
\begin{array}{rrcl}
  \pi:& (\RR\tensor L) \times{\RR_{\geq 0}}^P &\to& \lie{t}^* \\
  &(\vec v, (x_\lambda)_{\lambda\in P}) 
      &\mapsto&\mu + \vec v + \sum_{\lambda\in P} x_\lambda \lambda
\end{array}
$$
and we normalize the measure on $(\RR\tensor L) \leq \lie{t}^*$ to make
$vol((\RR\tensor L)/L) = 1$. 
The term doesn't fully depend on $P$,
just its image in $T^*/L$. If $L=0$ call the cone term \defn{pointed},
and if $\dim L = 1$ call it \defn{$1$-lineal}.
}
$$
\begin{array}{rrcl}
  \pi:& {\RR_{\geq 0}}^P &\to& \lie{t}^* \\
  &(x_\lambda)_{\lambda\in P}
      &\mapsto&\mu  + \sum_{\lambda\in P} x_\lambda \lambda
\end{array}
$$

This measure is only locally finite on $\lie{t}^*$ if $\pi$ is proper,
or equivalently, if the vectors $\lambda \in P$ 
live in an open half-space in $T^*$. 
This, we will arrange by requiring $\langle \vec v,\lambda\rangle > 0$.
At this point we {\em define} the Fourier transform of
${\exp(-\Phi(f))}/{\prod_{\lambda \in S} \lambda_+}$ to be $cone(\Phi(f),S)$.
Summing these cone terms, we have arrived at
\defn{Heckman's formula} (as it is called in \cite[\S3.3]{GLS}) for
the Duistermaat-Heckman measure.

\subsection*{Example: $\CP^2$} Let $T^2 \actson \CC^3$ with weights
$P = \{(0,0),(1,0),(0,1)\}$, and let $\vec v = (1,2)$. Let $M$ and $C$ be the
projectivization $\CP^2$. Then $DH_T(\CP^2, \omega)$ is Lebesgue measure
on the triangle with vertices $P$. The formula above computes it as pictured,
where cyan edges indicate flipped edges, 
i.e. those for which $\lambda_+ = -\lambda$.
(The dotted lines, which indicate a certain $3$-dimensionality of
the picture, will be explained later.)

\centerline{ \epsfig{file=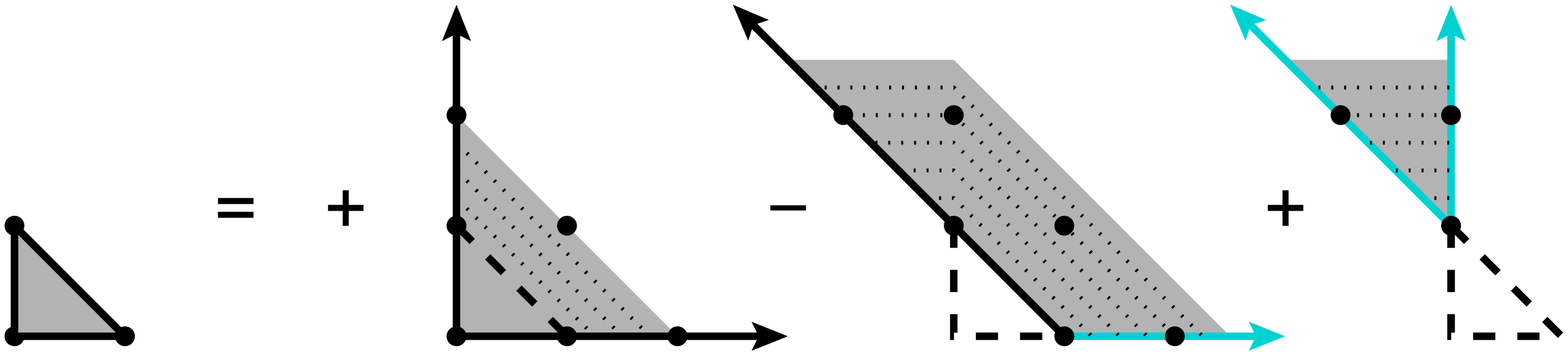,width=6in}} 

\subsection{Fourier equivalent measures}
There is a clear subtlety -- if we change $\vec v$, it changes the
individual terms, so what precisely is guaranteeing that the final sum
of measures is independent of $\vec v$? (Other than the obvious: the measure
was originally defined as a pushforward not depending on this choice.)

In what is to come the only measures we need consider on $\lie{t}^*$ will
be linear combinations of cone terms. Call two cone terms
$C\cdot cone(\mu,{\lambda})$ and $D\cdot cone(\mu',{\lambda'})$
\defn{Fourier equivalent} if $\mu = \mu'$, the two multisets 
$\{\lambda\}, \{\lambda'\}$ agree up to negating $k$ many weights,
and $C = (-1)^k D$. Call two measures Fourier equivalent if one can
be obtained from the other by replacing terms in one 
with Fourier equivalent terms to obtain the other. We thank Terry Tao
for pointing out the following lemma: 

\begin{Lemma}
  If $f,g$ are Fourier equivalent linear combinations of cone terms, and 
  there is a pointy cone $P$ such that $f,g$ are both supported with $P$,
  then $f=g$.
\end{Lemma}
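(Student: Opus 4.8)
The plan is to Laplace--transform everything. By definition a cone term $cone(\mu,S)$ is a locally finite measure, so (as noted in the text) its weight multiset $S$ lies in an open half-space; then $cone(\mu,S)$ is supported in the pointed cone $\mu+\RR_{\geq 0}S$, its mass on a ball of radius $\rho$ grows at most polynomially in $\rho$, and Fubini on the orthant gives
$$ \int_{\lie t^*} e^{-\langle x,\xi\rangle}\, d\bigl(cone(\mu,S)\bigr)(x) \;=\; e^{-\langle\mu,\xi\rangle}\prod_{\lambda\in S}\frac{1}{\langle\lambda,\xi\rangle} $$
as an absolutely convergent integral for every $\xi$ in the nonempty open set $U_{\mu,S}=\{\xi\in\lie t : \langle\lambda,\xi\rangle>0\ \text{for all }\lambda\in S\}$. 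A single Fourier-equivalence move replaces one $\lambda_0\in S$ by $-\lambda_0$ and the coefficient $C$ by $-C$, which leaves the rational-times-exponential function $C\,e^{-\langle\mu,\xi\rangle}\prod_{\lambda\in S}\langle\lambda,\xi\rangle^{-1}$ unchanged; hence $f$ and $g$, being Fourier equivalent, have one and the same \emph{formal Fourier transform} $R$, a well-defined meromorphic function on $\lie t^*_\CC$.

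Since $P$ is pointed, $\langle x,\xi\rangle\geq c_\xi|x|$ for all $x\in P$ whenever $\xi$ lies in the nonempty open dual cone $\mathrm{int}(P^\vee)$, so the polynomial-growth bounds on the total variations $|f|,|g|$ make $\mathcal L f(\xi):=\int e^{-\langle x,\xi\rangle}\,df(x)$ and $\mathcal L g(\xi)$ absolutely convergent, hence holomorphic, on the tube $\mathrm{int}(P^\vee)+i\lie t^*$. Once we know $\mathcal L f=\mathcal L g$ on this tube, restricting to a vertical slice $\xi_0+i\lie t^*$ identifies the ordinary Fourier transforms of the \emph{finite} measures $e^{-\langle\cdot,\xi_0\rangle}df$ and $e^{-\langle\cdot,\xi_0\rangle}dg$; these therefore coincide, and $f=g$ follows. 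So the proof reduces to showing $\mathcal L f=R$ on the tube (and likewise for $g$).

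The crux is this identification of the honest integral with the formal transform: summing $\int e^{-\langle\cdot,\xi\rangle}df$ term by term is legitimate only on $\bigcap_i U_{\mu_i,S_i}$, which may fail to meet $\mathrm{int}(P^\vee)$. I would handle this by fixing a generic $\vec v\in\mathrm{int}(P^\vee)$ and using the moves to rewrite $f$ Fourier-equivalently as $\tilde f=\sum_i \epsilon_i C_i\, cone(\mu_i,(S_i)_+)$, flipping every weight so that $\langle\vec v,\lambda\rangle>0$ for all $\lambda\in(S_i)_+$. Then $\vec v\in\bigl(\bigcap_i U_{\mu_i,(S_i)_+}\bigr)\cap\mathrm{int}(P^\vee)$, so near $\vec v$ the transform $\mathcal L\tilde f$ genuinely splits term by term and equals $R$. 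What remains is to see $\mathcal L f=\mathcal L\tilde f$ near $\vec v$, i.e.\ that $\int e^{-\langle x,\xi\rangle}\,d(f-\tilde f)(x)=0$ there (an honestly convergent integral, since $|f-\tilde f|\leq|f|+|\tilde f|$). Now $f-\tilde f$ is a $\ZZ$-combination of ``line terms'' $cone(\mu,T\cup\{\rho\})+cone(\mu,T\cup\{-\rho\})$, one for each elementary flip, hence a combination of cone terms whose formal transform vanishes identically and whose support lies in a half-space $\{\langle\vec v,\cdot\rangle\geq -c\}$; I must show such a combination has vanishing Laplace transform wherever that transform converges.

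I expect this last point to be the main obstacle, since the line terms individually have no convergent transform and the decomposition cannot be used naively. Two ways to finish suggest themselves. The cleanest is to invoke the Lawrence--Varchenko / Brion--Barvinok theory of the exponential valuation: the assignment $cone(\mu,S)\mapsto e^{-\langle\mu,\xi\rangle}\prod_{\lambda\in S}\langle\lambda,\xi\rangle^{-1}$ extends to a valuation well defined on the full span of cone terms precisely because the exponential integral over any polyhedron containing a line vanishes, so ``Fourier equivalence'' is literally a family of relations in the kernel, and ``$\mathcal L f=R$ on the convergence region'' becomes immediate. The more self-contained route is analytic continuation: applying the product over all weights $\lambda$ occurring in $f$ of the first-order constant-coefficient operators $\partial_\lambda$ turns $f$ into a combination of derivatives of point masses $\delta_{\mu_i}$, whose Laplace transform is entire, so $\mathcal L f$ extends to a meromorphic function on all of $\lie t^*_\CC$ after dividing by the polynomial symbol $\prod_i\prod_{\lambda\in S_i}\langle\lambda,\xi\rangle$; the same holds for $\tilde f$; the two extensions share a denominator and agree near $\vec v$ by the term-by-term computation for $\tilde f$ together with the one-variable cancellation already built into the definition of Fourier equivalence, and being meromorphic and equal on an open set they agree on the whole connected tube, giving $\mathcal L f=R$ there. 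I would carry out the valuation-theoretic version if a citation is acceptable and the analytic-continuation version otherwise.
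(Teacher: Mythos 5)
Your argument is correct in substance (via the analytic-continuation variant you sketch), but it takes a genuinely different route from the paper's. The paper's proof applies the finite-difference operators $(\partial_\lambda f)(x)=f(x)-f(x+\lambda)$, one factor for each weight occurring: nothing is lost in doing so, because if $\partial_\nu f=\partial_\nu g$ then $f-g$ is invariant under translation by $\nu$ and supported in the pointy cone $P$, hence zero; and after applying the full product every cone term becomes the projection of a parallelepiped, so both sides are compactly supported measures, determined by their now-honest Fourier transforms, which agree because Fourier equivalence preserves them term by term. Your route works instead with the Laplace transform on the tube over the interior of the dual cone of $P$ and replaces finite differences by distributional directional derivatives: the identity $\partial_\rho\, cone(\mu,T\cup\{\rho\})=cone(\mu,T)=\partial_\rho\bigl(-\,cone(\mu,T\cup\{-\rho\})\bigr)$ shows that the product of all the $\partial_\lambda$'s kills the flip relations and collapses every cone term to a derivative of a point mass, after which division by the polynomial symbol, the identity theorem, and injectivity of the Fourier transform of finite measures finish the argument; so your differentiation step is exactly the continuous analogue of the paper's differencing step. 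The paper's version buys elementarity -- it never leaves the category of measures, needs no convergence estimates or distributional Laplace transforms, and replaces your holomorphy/identity-theorem step by the translation-invariance trick -- while yours proves something slightly stronger, namely that the honest Laplace transform equals the formal rational function $R$ on the whole dual tube, not merely that $f=g$. Two cautions for a write-up: the decomposition of $f-\tilde f$ into ``line terms'' passes through intermediate flips whose cone multisets need not lie in an open half-space, so those terms need not be locally finite measures and the decomposition should be treated only formally or, better, bypassed by applying the differential operators directly to $f$ and $g$ (or to $f$ and $\tilde f$); and the Lawrence--Varchenko/Barvinok exponential valuation you would cite is stated for indicator functions of polyhedra, whereas cone terms with $|S|>\dim\lie{t}^*$ or repeated weights are truncated powers (multivariate splines), so that citation would need the Brion--Vergne/De Concini--Procesi partition-function version rather than the polytope-algebra one.
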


\begin{proof}
  \junk{  Let $f = \sum_i C_i cone(\mu_i, W_i)$,
    $g = \sum_i D_i cone(\mu'_i, W'_i)$.
    
    Since $P$ is a pointy cone, there is a functional $X \in \lie{t}$ 
    on $\lie{t}^*$ such that the map $X:\lie{t}^* \to \RR$ 
    is proper on $P$, and thereby, on the supports of $f$ and $g$.
    We can also pick $X$ generically enough that it takes on different
    values on distinct $\mu_i,\mu'_i$.
  }
  Let $\partial_\lambda$ denote the differencing operation 
  $(\partial_\lambda f)(x) = f(x) - f(x+\lambda)$. 
  If $\partial_\nu f = \partial_\nu g$ with $\nu\neq \vec 0$, then $f$
  and $g$ differ by a function $h$ invariant under translation by $\nu$.
  This $h$ will also be supported inside $P$, hence $h=0$. So we can
  simplify the equality to be checked by applying such operators,
  without losing information.

  If we take any single cone term $cone(\mu,\{\lambda\})$ and
  apply $\prod_\lambda \partial_{\lambda}$, the result is the projection of
  a parallelepiped. 
  At this point the two measures we are comparing are compactly supported,
  and are thus determined by their Fourier transforms.
\end{proof}

Heckman's formula produces a measure supported in a translate of the
cone spanned by $\{\lambda_+\colon \lambda$ a weight in some $T_fM\}$, 
which is a proper cone (i.e. pointy) by the assumption on $\vec v$.
Since the manifold $M$ is compact its DH measure is compactly supported, 
hence is supported in some translate of any proper cone. 
At this point we apply the lemma.

In what is to come we will need noncompact extensions of (1-3), as
to be found in \cite{PW, Thomason}. Instead of compactness of $M$, 
one reduces to the case $M$ connected, 
then asks that $M^T$ be compact, and finally that some component of $M^T$ be
\defn{attractive}, meaning that all of the isotropy weights in its
normal bundle lie in an open half-space of $T^*$. The moment map is then proper,
so (1) makes sense, and its image lies inside a proper cone, as was needed
in the argument above. One uses the AB/BV localization theorem
to make sense of (2), and although $\Gamma(C_i;\calO(d))$ is 
infinite-dimensional its weight spaces are finite-dimensional, 
making sense of (3).

Finally, we will need to relax the nondegeneracy of the symplectic form,
to a closed $2$-form. This changes (1) in that the moment map is no
longer determined up to translation (unless the form is generically
nondegenerate) but is explicit extra data.
It does not affect definition (2). 
For definition (3) one still needs $[\omega]$ to be the first Chern class
of a holomorphic line bundle $\calO(1)$, and the weight multiplicity
diagram is now made using the entire Euler characteristic of the sheaf
cohomology of $\calO(d)$, rather than just $H^0$ the space of sections.  
In the rest of the paper we work primarily with definition (2), based off
the AB/BV localization formula.

\section{Chern-Schwartz-MacPherson classes}\label{sec:csm}

We follow \cite{Ginzburg} for our treatment of CSM classes, as derived
from $\calD$-modules.

\subsection{An exact sequence of $\calD$-modules}\label{ssec:exact}

Let $\beta: B \into A$ be the closed inclusion of one smooth
complex manifold into another, in codimension $1$,
defined by the vanishing of a function $f$.
Then there is a short exact sequence of $\calD_A$-modules
$$ 0 \to \calO_A \xrightarrow{\cdot f}
\calO_{A\setminus B} \to \beta_*(\calO_B) \to 0 $$
where $\calD_A$ is the sheaf of differential operators on $A$,
and $\beta_*$ is the pushforward of $\calD$-modules. 

Our running example is very simple: $\{0\} \into \CC$. 
Since $\CC$ is affine, instead of working with sheaves we can take
global sections $\Gamma(\calD_\CC) \iso \CC[\hat x,\frac{d}{dx}]$.
Then the short exact sequence of $\CC[\hat x,\frac{d}{dx}]$-modules
$$
\begin{array}{rccccccl}
  0&\to&
        \CC[\hat x,\frac{d}{dx}]/\langle \frac{d}{dx} \rangle &\into& 
        \CC[\hat x,\frac{d}{dx}]/\langle \frac{d}{dx} \hat x \rangle &\onto& 
        \CC[\hat x,\frac{d}{dx}]/\langle \hat x \rangle &\to 0 \\ \\
  && \hfill 1&\mapsto&\hat x \hfill 1&\mapsto& 1 \hfill \\ \\
  \text{define ODEs} && \frac{d}{dx}f = 0 
       && \left(\frac{d}{dx}\hat x\right) f = 0 && \hat x f = 0 \\ \\
 \text{with solutions} && 1 && x^{-1} && \delta &\text{($\delta$ = Dirac delta)}
\end{array}
$$
and those solutions can be identified with generators 
of our $\calD_\CC$-modules:
$$
\begin{array}{rcccccccc}
  0&\to& \CC[x] &\into& x^{-1}\CC[x^\pm] &\onto& \delta \CC[\delta] &\to &0
\end{array}
$$

\subsection{Characteristic cycles}

The $\calO_A$-algebra $\calD_A$ is generated by vector fields, 
sections of $TA$, which are used to build directional derivatives and therefore
define (noncommuting) operators on $\calO_A$. 
If we instead interpret sections of $TA$ as fiberwise linear functions
{\em on} $T^*A$, then they generate a different, commutative, algebra:
the sheaf of (polynomial, not just linear) functions on $T^* A$.
One can make the relation more precise: the degree of differential operators 
induces a filtration on $\calD_A$, whose associated graded algebra
$gr\ \calD_A$ is $\calO_{T^* A}$. 

Given a $\calD_A$-module $\calF$, one could hope to filter it as well,
compatibly with the $\calD_A$ filtration. At that point 
$\calO_{T^* A} \actson gr\ \calF$, and we can consider its support cycle
$supp(gr\ \calF) \subseteq \calO_{T^* A}$ (which will typically have
multiplicities). It turns out (see e.g. \cite[definition 1.8.5]{Bjork})
that such ``good filtrations'' exist, not uniquely enough to canonically 
define the sheaf $gr\ \calF$, but uniquely enough to well-define its 
support cycle. In the running example from \S\ref{ssec:exact}, we get the 
following short exact sequence of modules over $gr\ \calD_\CC \iso \CC[x,y]$:
$$
\begin{array}{rcccccccl}
   0 
&  \to& \CC[x,y]/\langle y\rangle 
&\into& \CC[x,y]/\langle xy\rangle 
&\onto& \CC[x,y]/\langle y\rangle 
&  \to& 0 \\
  &&\hfill 1&\mapsto&x\hfill 1&\mapsto&1\hfill &
\end{array}
$$
The gradedness of $gr\ \calF$ can be interpreted as its bearing a circle action.
This has the specific consequence that $supp(gr\ \calF)$ is a 
\defn{conical} cycle inside $T^*A$, meaning, invariant under the 
dilation action $\CC^\times \actson T^*A$ that scales the cotangent vectors.
Consequently, we get a well-defined class
$$
[supp(gr\ \calF)] 
\ \in\ H^*_{\CC^\times}(T^*A) \quad
\iso H^*_{\CC^\times}(A) 
\iso H^*(A) \tensor H^*_{\CC^\times}(pt)
\iso H^*(A)[\hbar]
$$
in the dilation-equivariant cohomology of $T^*A$,
taking $\hbar$ as the generator of $H^*_{\CC^\times}(pt)$.

Perhaps the simplest example is $\calF = \calO_A$. 
Then $supp(gr\ \calF)$ is the zero section $A \subseteq T^*A$, and
its associated class is a $(-\hbar)$-homogenized version of the
total Chern class of the tangent bundle of $A$.

In this paper the only (complexes of) $D_M$-modules we will need consider 
are of the form $R\iota_*(\calO_A)$ for $\iota:A\into M$ the inclusion
of a locally closed submanifold. Hereafter we write
$$ cc(A\subseteq M) := \sum_i (-1)^i\ supp\left(gr\ R^i\iota_*(\calO_A)\right) $$
to denote the resulting ``characteristic cycle'', 
a conical Lagrangian cycle inside $T^*M$ defining an element
$[cc(A\subseteq M)] \in H^*_{\CC^\times}(T^*M) \iso H^*(M)[\hbar]$.
In the running example the derived pushforward $R^i\iota_*$ vanishes
for $i>0$, but in other examples such as $\CC^2\setminus 0 \into \CC^2$
(described in more detail later)
one must include some such higher pushforwards.

In general $cc(A\subseteq M)$ is very complicated, with many components
with various multiplicities. One component is the closure of the 
conormal bundle to $A$, and each other component is the closure of the
conormal bundle to some locally closed submanifold 
$B \subseteq \overline A \setminus A$.

\subsection{Chern-Schwartz-MacPherson classes 
  and their additivity}\label{ssec:csm}

\newcommand\calA{{\mathcal A}}

Recall that a \defn{constructible function} $f$ on $M$ is a finite linear
combination (with $\ZZ$-coefficients, say) of characteristic functions
of closed subvarieties. By splitting a subvariety into its regular and
singular locus, we can instead think of $f$ as a linear combination
$\sum_{A\in \calA} n_A 1_A$ of characteristic functions of locally
closed algebraic submanifolds $A\subseteq M$. This expansion is not
unique, however, so we need to treat it with care.

For example, write $1_\CC = 1_{\CC^\times} + 1_{0}$. To these three subsets
we can associate $\calD_\CC$-modules as computed in our running example,
and the characteristic cycles $\{y=0\}$, $\{xy=0\}$, $\{x=0\}$ respectively.
The exactness of the sequence from \S\ref{ssec:exact} leads to a
vanishing of its Euler characteristic, as an equation on cycles:
$$
\begin{array}{rccccccc}
&  cc(\CC \subseteq \CC)
  &-& cc(\CC^\times \subset \CC) &+& cc( \{0\}\subset \CC) &= &0 \\ \\
\text{Pictorially:} \quad&
  \text{\huge --} &\text{minus}& \text{\huge +} &\text{plus} & \big |
& =&0
\end{array}
$$
This alternating-sum statement doesn't quite match $1_\CC-1_{\CC^\times}-1_{0} = 0$.
To fix this mismatch, for $\iota_*: A \subseteq M$ a locally closed 
submanifold, we define its \defn{Chern-Schwartz-MacPherson class} $csm(1_A)$ as
$$ csm(1_A) := (-1)^{\codim_M A}\ [cc(A \subseteq M)]\qquad\in H^*_{\CC^\times}(T^*M).
$$
With the signs integrated into the definition, it is then a theorem that
this definition on $\{1_A\}$ extends in a well-defined way to 
constructible functions, at which point it is additive.
(While it wasn't important in the running example, in bigger examples
this additivity relies on $cc$ having been defined using the derived 
pushforward.)

The traditional definition is slightly off from this -- it lives in homology 
rather than cohomology, and is dehomogenized by setting $-\hbar$ to $1$.
We take this opportunity to rant about the horrific unnaturality of
considering inhomogeneous elements of cohomology, insofar as cohomology
should so very often be understood as the associated graded to $K$-theory.
In an associated graded space, only homogeneous elements can properly be 
asked to possess lifts. While this concludes the rant, we will retain
the powers of $\hbar$ through the rest of this paper.

The Deligne-Grothendieck conjecture, proven by MacPherson \cite{MacPherson}, 
characterized these CSM classes by a recurrence relation (a functoriality under
proper pushforward) and a base case ($A=M$ smooth and proper). 
One philosophical reason to prefer the description from
\cite{Ginzburg} recapitulated here is its individual definition for each
$A \subseteq M$, rather than reliance on a recurrence relation.

\subsection{Weber's divisibility property}

Taking $A\subseteq M$ as before, and a torus $T$ acting on $M$ preserving $A$,
then we can use the same definition to associate an 
\defn{equivariant CSM class} $csm(1_A) \in H^*_{T\times \CC^\times}(T^* M)$
(a slightly different approach appears in \cite{Ohmoto}).

\begin{Lemma}\cite[theorem 20]{Weber} \label{lem:weber} 
  If $p \in M^T$ is isolated, and $p\notin A$, 
  then $csm(1_A)|_p \equiv 0 \bmod \hbar$.
\end{Lemma}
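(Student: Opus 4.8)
The plan is to reduce the statement to a local computation at the fixed point $p$ and to extract the relevant $\hbar$-divisibility from the conical Lagrangian structure of $cc(1_A)$. First I would observe that $csm(1_A)|_p$ depends only on the germ of $A$ near $p$, so after passing to a $T$-invariant affine chart we may assume $M = \lie{t}$-representation $W = T_p M$, $p = 0$, and $A \subseteq W$ a $T$-invariant locally closed submanifold not containing $0$. Then $cc(A \subseteq W)$ is a conical Lagrangian cycle in $T^*W = W \times W^*$, invariant under both the dilation $\CC^\times$ and the torus $T$, and the restriction $csm(1_A)|_p$ is computed by equivariant localization at the fixed point $0 \in W$, i.e. by restricting the class $[cc(A\subseteq W)] \in H^*_{T\times\CC^\times}(T^*W)$ along the inclusion of the fiber $\{0\}\times W^*$ over $0$ and then dividing by the equivariant Euler class of $W$ (the product of the weights of $T$ on $W$).

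The key point is that since $p = 0 \notin A$, no component of $cc(A\subseteq W)$ is the conormal to a submanifold passing through $0$: every component is the closure of a conormal bundle $N^*_B W$ for some locally closed $B \subseteq \overline A \setminus A$, and $\overline A$ does not contain $0$ either — wait, that is false in general, since $0$ may lie in $\overline A \setminus A$. So I would instead argue as follows. Each component $Z$ of $cc(A\subseteq W)$ is a conical Lagrangian; its fiber over $0$, namely $Z \cap (\{0\}\times W^*)$, is a linear (or at least conical) subvariety of $W^*$, and the class contribution of $Z$ to $csm(1_A)|_p$ is $\pm$ (equivariant multiplicity of $Z$ at $0$) times a product of weights. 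The assertion $csm(1_A)|_p \equiv 0 \bmod \hbar$ says precisely that after writing $[cc(A\subseteq W)]|_0$ as a polynomial in $\hbar$ and the weights of $T$, the constant term (the $\hbar^0$ coefficient) vanishes. Now $\hbar$ is the equivariant parameter for the dilation $\CC^\times$ acting on the cotangent fibers; setting $\hbar = 0$ amounts to \emph{forgetting} the cotangent-fiber grading and pushing the cycle $cc(A\subseteq W)$ forward to the zero section $W$. Under this pushforward, a component $\overline{N^*_B W}$ maps to the closure of $B$ with multiplicity equal to its generic rank over $B$, which is $1$ on an open dense set — but crucially, the pushforward to $H^*_T(W)$ of $[cc(A\subseteq W)]$ is, up to the sign conventions, exactly $csm(1_A)$ \emph{as a constructible-function invariant}, and since $1_A$ evaluated at $p$ is $0$, the degree-zero part of this pushforward class restricted to $p$ vanishes — this is the classical normalization of CSM classes, that $csm(1_A)$ pushed to a point computes the Euler characteristic, and more locally that its restriction to $p$ in $H^0$ equals $1_A(p) = 0$.

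Concretely, I would carry this out in four steps: (i) localize to the formal/affine neighborhood of $p$, reducing to $A \subseteq W$ with $0\notin A$; (ii) identify the mod-$\hbar$ reduction of $[cc(A\subseteq W)]|_0 \in H^*_{T\times\CC^\times}(pt)$ with the restriction to $p$ of the pushforward $\pi_*[cc(A\subseteq W)] \in H^*_T(W)$ along the bundle projection $\pi: T^*W \to W$, using that $H^*_{\CC^\times}(pt) = \ZZ[\hbar]$ and that $\hbar\to 0$ corresponds to $\pi_*$ on conical classes; (iii) invoke the fact that this pushforward, up to sign, is the equivariant CSM class $csm(1_A)$ in $H^*_T(W)$ in the traditional (dehomogenized) normalization, whose degree-$0$ component is the constructible function $1_A$ itself; (iv) conclude that restricting to the isolated fixed point $p$ gives $1_A(p) = 0$ in $H^0$, hence $csm(1_A)|_p \equiv 0 \bmod \hbar$. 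The main obstacle I anticipate is step (ii): making precise the claim that reduction mod $\hbar$ of the localized equivariant class equals the restriction of the naive (non-cotangent) pushforward, since this requires knowing that the fiber of each Lagrangian component over $p$ contributes no $\hbar^0$ term beyond what the zero-section pushforward sees — equivalently, that the ``vertical'' components (conormals to things through $p$) only contribute in positive $\hbar$-degree, which is where Weber's argument presumably does its real work and where I would need to track the $\CC^\times$-weights on the conormal fibers carefully.
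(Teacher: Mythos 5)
The paper itself does not prove this lemma: it simply cites \cite[theorem 20]{Weber} and remarks that the only work needed is translating Weber's inhomogeneous CSM classes into the $\hbar$-homogeneous normalization used here. So you are attempting to supply a proof of the cited result, which is legitimate in principle; but your argument has a genuine gap, located exactly where you suspected (your step (ii)), and the two identifications you propose to bridge it are false as stated. Setting $\hbar\to 0$ is restriction along the subtorus $T\subset T\times\CC^\times$, i.e.\ forgetting the dilation action; it preserves the cohomological degree $2n$ ($n=\dim_\CC M$) of $[cc(A\subseteq M)]$. It is \emph{not} the fiberwise pushforward $\pi_*$ along $T^*M\to M$: that would drop the degree by $2n$, and it is not even defined on these cycles, since $\pi$ is not proper on conormal components (e.g.\ on a fiber $T^*_pM$ itself). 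Consequently the $\hbar^0$ coefficient of $csm(1_A)|_p$ is the restriction at $p$ of the \emph{top}-degree ($H^{2n}_T$, i.e.\ point-class / zero-dimensional) component of the traditional equivariant CSM class, not of its $H^0$ component.

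Your steps (iii)--(iv) then rest on the claim that the $H^0$ part of the traditional CSM class restricted to $p$ equals $1_A(p)$. That is both the wrong component and false: for $A=\CC^\times\subset\CP^1$ and $p=0$ one has $csm_T(1_A)|_p=(1+\lambda)-\lambda=1$, whose degree-$0$ part is $1\neq 0=1_A(p)$, while its degree-$1$ part is $0$ --- and the latter vanishing is what the lemma asserts (in the homogeneous normalization $csm(1_{\CC^\times})|_0=\hbar$). So the normalization facts you invoke (``CSM integrates to the Euler characteristic,'' ``the degree-zero part is the constructible function'') do not yield the statement; what is actually needed is that the point-class component of $csm_T(1_A)$, restricted at $p$, equals $1_A(p)\cdot e_T(T_pM)$ for every invariant locally closed $A$, no matter how singular $\overline{A}$ is at $p\in\overline{A}\setminus A$. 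By additivity one can reduce to closed invariant $Z\ni p$, but one must still show the local contribution at $p$ is exactly $e_T(T_pM)$ independently of the singularity of $Z$ at $p$; this is precisely the content of Weber's theorem (provable, e.g., by a local index / intersection-with-a-generic-section argument), and it is the step your sketch defers rather than supplies. A small additional point: the restriction $csm(1_A)|_p$ is just the pullback to $p$; no division by the Euler class of $T_pM$ enters its definition, only the localization formula for integrals.
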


That requires a straightforward bit of translation from \cite{Weber},
as CSM classes there are inhomogeneous.

\junk{

\begin{Lemma}
  Let $T\actson M$, and let $L \subseteq T^*M$ be a $T$-invariant
  conical Lagrangian subvariety. Let $p\in M^T$ be an isolated fixed point,
  with isotropy $T$-weights $P$ (a set with multiplicity).
  Let $Power(P)$ be the set of all sub-multisets of $P$
  (i.e. if $\lambda$ occurs in $P$ with multiplicity $k$, and $S\in Power(P)$,
  then $\lambda$ occurs in $S$ with some multiplicity $k'\leq k$,
  and in $P\setminus S$ with multiplicity $k-k'$).

  Then there exists a function $n\colon Power(P)\to \NN$ such that
  $$ [L]|_p = \sum_{S \in Power(P)} n(S) 
  \left(\prod_{\lambda \in S} \lambda\right)
  \left(\prod_{\lambda \in P\setminus S} (\hbar-\lambda) \right)
  $$  
\end{Lemma}

\begin{proof}
  We first pass to
  the formal neighborhood $M_{(p)}$ of $p\in M$, and its cotangent bundle,
  and to the germ $L_{(p)}' \subseteq T^*(M_{(p)})$; it now suffices to
  compute $[L_{(p)}]|_p \in H^*_{\CC^\times \times T}(T^*(M_{(p)}))$.
  The remainder of the proof will be a sort of cotangent bundle
  version of \cite[lemma D]{grobGeom}. 

  Pick co\"ordinates $M_{(p)} \iso \CC[[x_1,\ldots,x_n]]$. To lex-degenerate 
  a particular co\"ordinate $x_i$, we consider the ring automorphism scaling
  $x_i$ by a parameter $t$, and let $t\to\infty$; if we do this to each of
  the co\"ordinates in order we 
\end{proof}

}

\section{The main theorem:
  the geometry of cone terms}\label{sec:cones}

We are now ready to give an algebro-geometric interpretation of the individual 
cone terms in Heckman's formula.
This is in similar spirit to the approach
of \cite{GGK}, where these cone terms are interpreted as components of
the (other end of the) boundary of a noncompact symplectic cobordism.
It would be interesting to connect the two approaches, perhaps through the
algebraic cobordism of \cite{LP}.

Our input is a complex projective symplectic manifold 
$(M,\omega)$ with an algebraic action of a torus $T$, 
and a Bia{\fontfamily{ppk}\selectfont \l}ynicki-Birula 
decomposition defined using a circle $S\into T$
(or equivalently, a Morse decomposition defined using a component of
$T$'s moment map). We assume $M^T$ finite, and $S$ generic enough that
$M^S = M^T$, then write the decomposition
into attracting sets as $M = \coprod_{p\in M^T} M_p^\circ$. 
Each inclusion $\iota^p: M_p^\circ \into M$ defines a Lagrangian cycle
$cc(M_p^\circ \subseteq M)$ in $T^*M$, as in \S\ref{sec:csm}.

\newcommand{\xrightarrowdbl}[2][]{%
  \xrightarrow[#1]{#2}\mathrel{\mkern-14mu}\rightarrow
}

We give $T^*M$ the degenerate $2$-form $\omega_+ := \pi^*(\omega)$
where 
$M \xhookrightarrow\iota T^*M\xrightarrowdbl\pi M$ 
are the inclusion and projection. 
This choice is dictated by wanting
$\omega_+$ to be {\em invariant,} not just a weight vector, 
under the dilation action $\CC^\times \actson T^*M$ on the fibers, 
and wanting $\iota^*(\omega_+)$ to be $\omega$. (There is a familiar
nondegenerate $2$-form ``$d\alpha$'' available on $T^*M$, 
which one might be tempted to add to $\omega_+$.
As that form is exact it wouldn't affect our cohomology-based calculations,
but it {\em would} spoil the dilation-invariance.) Now we compute:
$$
\begin{array}{rcll}
  && DH_{\CC^\times\times T}(M \subseteq M, \omega) \\[.1cm]
  &=& DH_{\CC^\times\times T}(M \subseteq T^*M, \omega_+) 
  &\text{since $\iota^*(\omega_+) = \omega$} \\[.2cm]
  &=& F.T. \int_{T^*M} [M] \exp(\tilde\omega_+) 
  \ =\ F.T. \int_{T^*M} csm(1_M) \exp(\tilde\omega_+) \\[.1cm]
  &=& F.T. \int_{T^*M} csm\left(\sum_p 1_{M_p^\circ}\right) \exp(\tilde\omega_+) \\[.2cm]
  &=& \sum_p F.T.\int_{T^*M} csm\left(1_{M_p^\circ}\right) \exp(\tilde\omega_+) 
      &\text{by additivity of CSM classes}\\[.2cm]
  &=& \sum_p F.T. \int_{T^*M} (-1)^{\codim_M M_p^\circ}
      [cc(M_p^\circ \subseteq M)] \exp(\tilde\omega_+) 
  & \text{as in \S\ref{ssec:csm}}\\[.2cm]
  &=& \sum_p (-1)^{\codim_M M_p^\circ}\ F.T. \int_{T^*M} 
      [cc(M_p^\circ \subseteq M)] \exp(\tilde\omega_+) \\[.2cm]
  &=& \sum_p (-1)^{\codim_M M_p^\circ}\ 
      DH_{\CC^\times\times T}(cc(M_p^\circ \subseteq M) 
      \subseteq T^* M, \omega_+) 
\end{array}
$$
Before comparing (not equating!) our terms 
$DH_{\CC^\times\times T}(cc(M_p^\circ \subseteq M) \subseteq T^* M, \omega_+)$
to the cone terms in the Heckman formula, we consider 
the basic example $M = \CP^1 = \CC \coprod \{\infty\}$. 
Pictured below are the moment images of $T^*M$, 
$cc(\CC\subseteq \CP^1)$, $cc(\{0\} \subseteq \CP^1)$
all with respect to the two-torus $\CC^\times \times T$, 
where $T$ is the maximal torus of $PGL_2(\CC)$ and $\CC^\times$ acts by
dilation on the cotangent fibers. 

\epsfig{file=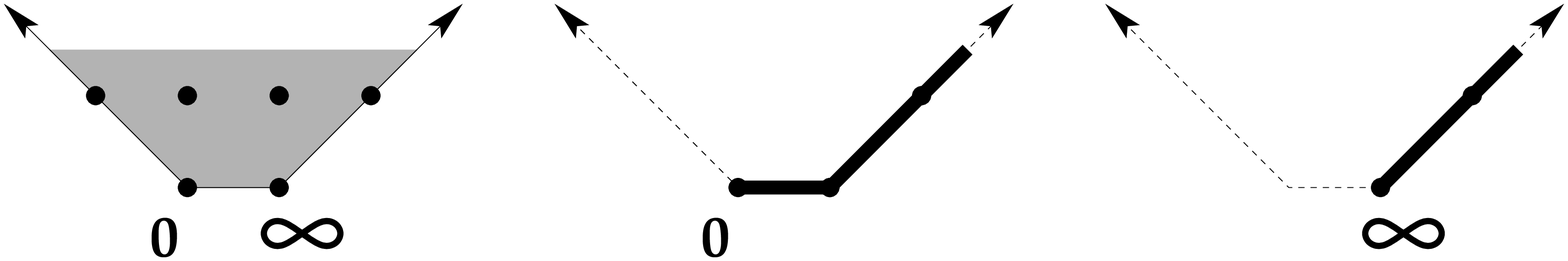,width=6in}

Note that in this tiny example, $T^*M$ is toric w.r.t. our augmented
torus $\CC^\times \times T$, but this will never happen in larger examples.

Even without the pictures, there is an obvious difference between the
terms in this alternating sum vs. the ones in the Heckman formula: 
in {\em this} sum, the terms involve an extra $\CC^\times$ action,
dilating the fibers of the cotangent bundle. (Note too that the moment
image of $cc(\CC\subseteq \CC\PP^1)$ is not a polytope, which can be
blamed on the characteristic cycle being reducible.)

To drop that action, consider the inclusion $T \into S^1 \times T$
inducing $T^* \times \ZZ \onto T^*$, which
\begin{itemize}
\item on the cohomology algebra level, amounts to setting $\hbar\to 0$, and
\item on the moment polytope level, amounts to pushing forward the measure
  along the \break projection $\lie{t}^* \times \RR \onto \lie{t}^*$.
\end{itemize}

\noindent
\begin{minipage}{.45\linewidth}
    \parindent = 1em
  In the $2$-dimensional pictures above, that amounts to projecting
  the measures to the horizontal line, from which to obtain the usual
  formula for $DH_T(\CP^1,\omega)$:
\end{minipage}
\hfill
\begin{minipage}{.45\linewidth}
 \epsfig{file=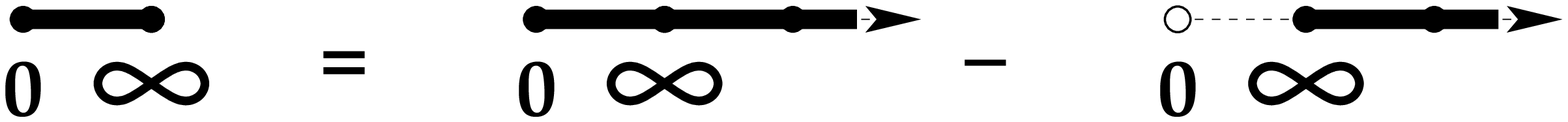,width=3in}
\end{minipage}
\hfill \phantom{.}

\noindent
One might consider those $2$-dimensional pictures above as ``bent''
versions of the half-lines that we really want, but that we only obtain after
the dilation action is suppressed. Looking back at the $\CP^2$ example in
\S\ref{sec:DH}, this extra action can be pictured by seeing the
triangle as flat in the page, and the other regions as coming out of
the page. The dotted lines indicate level sets in those
$3$-dimensional pictures.

We have now arrived at the main theorem, giving geometric interpretation
to the individual terms in the Heckman formula:
it is almost correct (and conjecturally correct) to say 
they are {\em themselves} DH measures, not of $M$ 
but of the characteristic cycles $cc(M_p^\circ \subseteq M) \subseteq T^*M$.

\begin{Theorem}\label{thm:main}
  Let $S\into T \actson (M,\omega)$, $M = \coprod_{p\in M^T} M_p^\circ$ be as
  described at the beginning of \S\ref{sec:cones}.
  Then $DH_{\CC^\times\times T}(cc(M_p^\circ \subseteq M)\subseteq T^*M, \omega_+)$ 
  is Fourier equivalent (and conjecturally equal)
  to a measure whose projection to $\lie{t}^*$ is proper, 
  and that projection is $p$'s cone term from the Heckman formula.
\end{Theorem}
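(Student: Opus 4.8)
The plan is to reduce the theorem to a local computation at each fixed point $f \in M^T$, using the AB/BV localization formula together with Lemma~\ref{lem:weber}. First I would apply definition (2) of the Duistermaat-Heckman measure to $cc(M_p^\circ \subseteq M) \subseteq T^*M$ with the degenerate form $\omega_+$. Since $\omega_+ = \pi^*\omega$ and $\pi$ collapses the cotangent fibers, the equivariant extension $\tilde\omega_+$ restricted to a fixed point $f$ of $\CC^\times \times T$ equals $-\Phi(f)$ exactly as in the $C=M$ case — the dilation direction contributes nothing to the moment value. The fixed points of $\CC^\times \times T$ on $T^*M$ lying over $f \in M^T$ are the coordinate subspaces of $T^*_f M$ spanned by subsets of the cotangent weights, so localization gives
$$
\int_{T^*M} [cc(M_p^\circ \subseteq M)]\, \exp(\tilde\omega_+)
= \sum_{f \in M^T} \exp(-\Phi(f))\,
\frac{[cc(M_p^\circ \subseteq M)]|_f}{\prod_{\lambda \in wts(T_f M)} \lambda \cdot \prod_{\lambda \in wts(T_f M)} (\hbar - \lambda)},
$$
the denominator being the product of the $\CC^\times\times T$-weights on $T_f(T^*M) = T_fM \oplus T^*_fM$.

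Next I would expand $[cc(M_p^\circ \subseteq M)]|_f$ in the basis of squarefree monomials in the weights, using the cotangent-bundle analogue of \cite[Lemma D]{grobGeom} (the commented-out Lemma in \S\ref{ssec:csm} is exactly the statement needed): there exist $n_S \in \NN$ with
$$
[cc(M_p^\circ \subseteq M)]|_f = \sum_{S} n_S \Big(\prod_{\lambda \in S}\lambda\Big)\Big(\prod_{\lambda \in wts(T_fM)\setminus S}(\hbar - \lambda)\Big).
$$
Dividing by the localization denominator collapses each term to $n_S / \big(\prod_{\lambda \in S}\lambda \cdot \prod_{\lambda \notin S}(-\lambda)\big)$, i.e. a product of first-order poles in both the $\lambda$'s and the $(-\lambda)$'s — precisely the shape whose Fourier transform was declared to be a cone term. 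Setting $\hbar \to 0$ (equivalently pushing the measure forward along $\lie t^* \times \RR \onto \lie t^*$, as explained in the text) kills the poles involving $\hbar$ and leaves a sum of honest cone terms in $\lie t^*$. So after projection we obtain an alternating sum of cone terms indexed by $(f, S)$, and the claim ``Fourier equivalent to $p$'s cone term'' becomes the assertion that, summed against the sign $(-1)^{\codim M_p^\circ}$, these match the $(f,S)$-terms in the Heckman formula attached to $p$.

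To pin down the match I would use two inputs. First, Lemma~\ref{lem:weber}: since $cc(M_p^\circ \subseteq M)|_f$ is $csm(1_{M_p^\circ})|_f$ up to sign, and $csm(1_{M_p^\circ})|_f \equiv 0 \bmod \hbar$ whenever $f \notin M_p^\circ$, the only contribution surviving the $\hbar \to 0$ limit comes from $f = p$ itself (the unique fixed point in $M_p^\circ$, by Bia{\l}ynicki-Birula). Thus the projected measure is supported, after regularization, in a translate of a pointy cone with apex $\Phi(p)$ — which is what makes its projection proper and lets us invoke the Lemma on Fourier equivalent measures. Second, at $f = p$ one checks that the Bia{\l}ynicki-Birula cell $M_p^\circ$ has tangent space equal to the sum of the attracting weight directions, its conormal the repelling directions, so $[cc(M_p^\circ\subseteq M)]|_p$, reduced mod $\hbar$, is $\pm\prod_{\lambda \text{ repelling}}\lambda$; dividing by $\prod_{\lambda}\lambda$ and tracking the sign $(-1)^{\codim M_p^\circ}$ against the $\lambda_+$-flips in the Heckman normalization gives exactly $cone(\Phi(p), \{\text{attracting weights}\})$, i.e. $p$'s cone term.

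The main obstacle is the gap between ``Fourier equivalent'' and ``equal,'' which is exactly why the theorem only claims the former unconditionally: the higher characteristic-cycle components (conormals to strata in $\overline{M_p^\circ}\setminus M_p^\circ$) do contribute at fixed points $f \neq p$, and although Lemma~\ref{lem:weber} guarantees those contributions vanish mod $\hbar$ — hence vanish after projection at the level of the \emph{total} measure — controlling them term-by-term before the limit, so as to exhibit an actual measure (not just a Fourier-equivalence class) whose projection is the cone term, would require knowing that each such $n_S$-term pairs off into a Fourier-trivial combination. That finer cancellation is what I expect to be genuinely hard, and is presumably the content of the conjecture; for the stated theorem it suffices to apply the Lemma on Fourier equivalent measures to the two sides, both of which are now linear combinations of cone terms supported in a common pointy cone.
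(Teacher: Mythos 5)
Your overall strategy is the paper's: localize over $M^T$ with denominator $\prod_{\lambda\in wts(T_fM)}\lambda(\hbar-\lambda)$, regularize by flipping, identify the pushforward to $\lie{t}^*$ with setting $\hbar\to 0$, invoke Lemma~\ref{lem:weber} to kill every $f\neq p$, and evaluate the surviving term at $p$ from the conormal structure of the cell $M_p^\circ$. But your endgame computation, as written, is wrong. At $p$ one has $[cc(M_p^\circ\subseteq M)]|_p=\prod_{\lambda\ \mathrm{normal}}\lambda\cdot\prod_{\lambda\ \mathrm{tangent}}(\hbar-\lambda)$ (normal-to-cell directions contribute $\lambda$, cotangent/conormal directions contribute $\hbar-\lambda$), and setting $\hbar\to0$ turns each factor $\hbar-\lambda$ into $-\lambda$ -- it does not delete it. So mod $\hbar$ the class is $\pm\prod_{\mathrm{all}}\lambda$, not $\pm\prod_{\mathrm{repelling}}\lambda$, and the localization denominator at $\hbar=0$ is $\pm\prod_{\mathrm{all}}\lambda^2$, not $\prod_\lambda\lambda$; your two omissions do not cancel, and you end with $1/\prod_{\mathrm{attracting}}\lambda$, a cone generated only by the attracting weights, of dimension $\dim M_p^\circ$. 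The Heckman cone term at $p$ is generated by all $\dim M$ flipped weights $\lambda_+$ (for the point cell of $\CP^1$ your answer would be a delta mass at $\Phi(p)$ instead of the half-line which is the moment image of the cotangent fiber). Done correctly, the surviving term is $e^{-\Phi(p)}\big/\big(\prod_{\mathrm{tangent}}\lambda\cdot\prod_{\mathrm{normal}}(\hbar-\lambda)\big)$, which needs no flips at all, and whose projection is $cone(\Phi(p),\{\lambda_+\})$ -- exactly the paper's conclusion, and consistent with the per-$S$ formula you yourself wrote two paragraphs earlier, which your final paragraph contradicts.

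Second, your properness argument is circular: you justify properness of the projection by pointing at the support of the projected measure, but properness is needed before the projection can be formed, and pointedness of a cone in $\lie{t}^*\times\RR$ does not by itself make its projection to $\lie{t}^*$ proper (a pointed cone containing the $\hbar$-direction projects improperly). The paper's argument is the one to use: after flipping with respect to the generator of $\lie{s}$, every generator of every cone term -- including those of the form $\pm(\hbar-\lambda)$ -- pairs strictly positively with that generator, so the support meets the $\hbar$-axis only at the origin and the composite $\lie{t}^*\times\RR\onto\lie{t}^*\onto\lie{s}^*$ is proper on it. Two smaller points: the $\CC^\times\times T$-fixed locus of $T^*M$ is just $M^T$ sitting in the zero section, not the coordinate subspaces of the fibers (your localization formula is nonetheless the correct one); and the expansion via the commented-out cotangent analogue of Lemma D of \cite{grobGeom} is both unproven in the paper and unnecessary -- Weber's lemma at $f\neq p$ together with the explicit conormal class at $p$ is all the paper uses.
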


There are two subtleties in the theorem's statement. What we are really after
could reasonably be called $DH_T(cc(M_p^\circ \subseteq M) \subseteq T^*M,
\omega_+)$. One problem is that the $T$-moment map on $T^*M$ isn't proper
for $\dim M>0$. We {\em believe} that its restriction to
$cc(M_p^\circ \subseteq M)$ {\em is} proper, but (a) this has been 
frustratingly elusive and (b) that characteristic cycle is typically singular 
so we prefer to keep our integration definition on the manifold $T^*M$.
(In \S\ref{ssec:problematic} we study a slightly different situation 
where the $T$-moment map is {\em not} proper on the characteristic cycle.)

\begin{proof}
  Let $C$ denote the cycle $cc(M^\circ_p \subseteq M)$. 
  We recall that it consists of the closure of the conormal bundle to 
  ${M^\circ_p}$ union various other conormal varieties living
  over $\overline{M^\circ_p} \setminus M^\circ_p$. In particular
  $$ [C]|_p = \prod_{\lambda \in T_p M}
  \begin{cases}
    \lambda & \text{if $\lambda$ defines a positive $S$-weight} \\
    \hbar-\lambda & \text{if $\lambda$ defines a negative $S$-weight} \\
  \end{cases}
  $$
  (no $\lambda$ will define the $S$-weight $0$, by our choice of $S$).

  The Fourier transform of $C$'s DH measure on $\lie t^* \times \RR$
  is $\sum_{f\in M^T} \exp(-\Phi(f)) 
  \frac{ [C]|_f} {\prod_{\lambda \in wts(T_f M)} \lambda(\hbar-\lambda)}$.
  To associate a measure to it (which might only be Fourier equivalent
  to the actual DH measure), as in \S\ref{sec:DH} we need to flip some 
  weights in denominators. We make that choice using the 
  generator of $S$'s Lie algebra. It is easy to see that the 
  resulting measure has proper projection along the composite
  $\lie{t}^* \times \RR \onto \lie{t}^* \onto \lie{s}^*$, hence 
  has proper projection to $\lie{t}^*$.

  To compute the projection to $\lie{t}^*$, on the Fourier transform side,
  amounts to setting $\hbar\to 0$. Now we use lemma \ref{lem:weber}
  to note that $[C]|_f \equiv 0 \bmod \hbar$ for $f\neq p$. Hence our
  sum reduces to a single term
  $$ \left( \exp(-\Phi(p)) 
    \frac{ [C]|_p} {\prod_{\lambda \in wts(T_p M)} \lambda(\hbar-\lambda)}
  \right)\bigg|_{\hbar\to 0} = (-1)^{\codim_M M_p^\circ} \exp(-\Phi(p))
  \bigg / {\prod_{\lambda \in wts(T_p M)}} \lambda_+ $$
  which is exactly the term in the localization formula.
\end{proof}

It is worth spelling out the interconnectedness of the different
points of view in the case of the flag manifold, as in Heckman's
thesis, which gives the asymptotic version of Kostant's multiplicity
formula (see \cite[\S 3]{GLS} for the connection).
Our derivation is based on the $\calD_{G/B}$-modules
associated to Bruhat cells; the global sections of these are the Verma modules.
The exact sequence given in \S\ref{ssec:exact} for a single divisor,
when extended to the full Bruhat decomposition, gives the BGG resolution
involving those Verma modules (see \cite{Kempf}).
The complexity we meet here, with the ``bending'' of the individual
cone terms, is closely related to the complexity (the non-simplicity)
of Verma modules. (It is not {\em quite} the same complexity, as even
a simple $\calD$-module can have reducible characteristic cycle, a
well-known example being that of Kashiwara-Saito.)

\section{The Brianchon-Gram theorem 
  and other extensions}\label{sec:bg}

Let $M$ be a smooth projective toric variety, with a moment polytope 
$P \subset \lie{t}^*$. Instead of using a Morse decomposition, we consider
the full decomposition $M = \Union_{F\subset P} M_F^\circ$ into $T^\CC$-orbits,
one for each face of $P$. Then as in \S\ref{sec:csm} we obtain
\begin{equation}
  \label{eq:BG}
    DH_{\CC^\times\times T}(M \subseteq M, \omega) 
  =  \sum_F (-1)^{\codim_P F}\ 
  DH_{\CC^\times\times T}(cc(M_F^\circ \subseteq M) 
  \subseteq T^* M, \ \omega_+) 
\end{equation}
(though the $\codim$ in the exponent is now the {\em real} codimension).
Then as in theorem \ref{thm:main}, we project the measure from
$\RR\times \lie{t}^*$ to $\lie{t}^*$. 

\begin{Theorem}\label{thm:BG}
  Let $(M,\omega)$ is a symplectic toric manifold with moment
  polytope $P\subseteq \lie{t}^*$, let $M_F^\circ \subseteq M$
  be the $T$-orbit corresponding to a face $F\subseteq P$,
  and let $v$ be any vertex of $F$.
  Let $T_\mu F$ denote the (real) tangent space to an interior point
  $\mu$ of $F$, and $W \subset T^*$ denote the primitive integer
  vectors along the edges from $v$ out of $F$.

  Then the pushforward to $\lie{t}^*$ of the measure
  $DH_{\CC^\times\times T}(cc(M_F^\circ \subseteq M) \subseteq T^* M, \ \omega_+)$
  is
  $$ \pi_*(\text{Lebesgue measure on $T_\mu F \times {\RR_{\geq 0}}^W$}),
  \quad\text{where}\quad
  \begin{array}{rrcl}
    \pi:& {T_\mu F \times \RR_{\geq 0}}^W &\to& \lie{t}^* \\
        &(\vec v, (x_\lambda)_{\lambda\in W})
      &\mapsto&\mu + \left(\vec v - \sum_{\lambda\in W} x_\lambda\lambda\right)
  \end{array}
  $$
  where we normalize the measure on $T_\mu F \leq \lie{t}^*$ using its
  intersection with the lattice $T^*$.

  If we pushforward the LHS of equation (\ref{eq:BG}) to $\lie{t}^*$,
  we get Lebesgue measure on $P$, and the pushforward of the RHS
  gives the ``Brianchon-Gram formula'': an alternating sum over all $F$ of
  the cone centered at $F$, with lineality space $T_\mu F$, and 
  generators $W$ as defined above.
\end{Theorem}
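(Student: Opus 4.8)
The plan is to run the same localization computation as in Theorem \ref{thm:main}, but now for each toric orbit closure $M_F^\circ$ rather than a Morse cell, and to track which fixed points survive after setting $\hbar\to 0$. First I would fix the face $F$ and its vertex $v$, and record the list of isotropy weights $wts(T_vM)$: since $M$ is a smooth toric manifold, these weights form a $\ZZ$-basis of $T^*$, partitioned into those \emph{tangent} to $F$ at $v$ and those \emph{transverse} to $F$ at $v$ (the latter are exactly a sign-choice of the primitive edge vectors $W$ pointing out of $F$). I would then compute $[cc(M_F^\circ\subseteq M)]|_v$. The characteristic cycle of a locally closed torus orbit in a smooth toric variety is well understood: it is the conormal to $M_F^\circ$ plus conormals to smaller orbits in $\overline{M_F^\circ}\setminus M_F^\circ$, and its restriction to $v$ factors as $\prod_{\lambda \text{ tangent to }F}\lambda \ \cdot\ \prod_{\lambda\in W'}(\hbar-\lambda)$ for an appropriate sign-normalization $W'$ of $W$ — this is the toric analogue of the $[C]|_p$ formula proved inside Theorem \ref{thm:main}, and it is where I expect the bulk of the bookkeeping to live.

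Next I would apply AB/BV localization exactly as in \S\ref{sec:DH} and \S\ref{sec:cones}: the Fourier transform of $DH_{\CC^\times\times T}(cc(M_F^\circ\subseteq M)\subseteq T^*M,\omega_+)$ on $\lie t^*\times\RR$ is $\sum_{f\in M^T}\exp(-\Phi(f))\,[cc(M_F^\circ\subseteq M)]|_f\big/\prod_{\lambda\in wts(T_fM)}\lambda(\hbar-\lambda)$. To extract the pushforward to $\lie t^*$ I set $\hbar\to 0$ and invoke Weber's divisibility (Lemma \ref{lem:weber}): $[cc(M_F^\circ\subseteq M)]|_f\equiv 0\bmod\hbar$ for every fixed point $f\notin M_F^\circ$. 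Unlike the Morse case, here $M_F^\circ$ may contain \emph{several} fixed points (all the vertices of $F$), so the sum does not collapse to one term; instead it collapses to a sum over the vertices $v'$ of $F$. For each such $v'$ the surviving contribution is, after the $\hbar\to0$ specialization and the weight-flips dictated by a generic direction, the Fourier transform of $\exp(-\Phi(v'))$ divided by the product of the flipped transverse weights at $v'$ — i.e. the Fourier transform of the pointed cone at the vertex $v'$ with generators the outward edge directions $W_{v'}$. The Brianchon-Gram-type decomposition of Lebesgue measure on a polytope as an alternating sum over faces, \emph{each face's term itself written as a sum of vertex cones}, must then be reconciled with the claimed single expression $\pi_*(\text{Lebesgue on }T_\mu F\times\RR_{\geq0}^W)$; the point is that summing the vertex-cone contributions over all vertices $v'$ of $F$, with the lineality directions $T_\mu F$ reinserted (these come from the $S^1$-suppression / the tangent weights to $F$ that got killed mod $\hbar$), reassembles exactly the $F$-centered cone with lineality space $T_\mu F$. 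I would verify this reassembly using the Fourier-equivalence Lemma (the cone-term calculus): both sides are locally finite measures supported in a common pointy cone after choosing a generic functional, so it suffices to match them up to Fourier equivalence, which reduces to an identity of rational functions in the weights that is checked vertex-by-vertex.

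Finally, for the global statement I would push forward both sides of \eqref{eq:BG} to $\lie t^*$: the left side is $DH_T(M,\omega)=\text{Lebesgue on }P$ by the classical Duistermaat-Heckman theorem for the toric $M$, and the right side, by the per-face computation just described, becomes $\sum_F(-1)^{\codim_P F}\big(\text{cone centered at }F\text{ with lineality }T_\mu F\text{ and generators }W\big)$, which is precisely the Brianchon-Gram identity for the polytope $P$. The main obstacle I anticipate is not the localization — that is purely formal given \S\ref{sec:cones} — but pinning down the equivariant restriction $[cc(M_F^\circ\subseteq M)]|_{v'}$ with the correct signs and the correct set of ``extra'' conormal components, and then doing the vertex-cone bookkeeping so that the alternating sum over faces-of-faces telescopes into the clean $F$-centered cone with lineality space $T_\mu F$; keeping the two codimension conventions (the real $\codim_P F$ here versus the complex $\codim_M$ in Theorem \ref{thm:main}) straight is the subtle part.
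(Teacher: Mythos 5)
Your central step fails, and not just in its bookkeeping. For a face $F$ with $\dim F>0$, the orbit $M_F^\circ$ contains \emph{no} $T$-fixed points: the vertices of $F$ correspond to fixed points lying in $\overline{M_F^\circ}\setminus M_F^\circ$, not in $M_F^\circ$, contrary to your assertion that ``$M_F^\circ$ may contain several fixed points (all the vertices of $F$)''. Consequently Lemma \ref{lem:weber} applies to \emph{every} $f\in M^T$ and gives $[cc(M_F^\circ\subseteq M)]|_f\equiv 0\bmod\hbar$, so the recipe you import from Theorem \ref{thm:main} --- localize, flip weights, set $\hbar\to 0$ --- returns the zero measure rather than the cone with lineality space $T_\mu F$. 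This signals a structural obstruction: a measure with a nontrivial lineality space is not a finite combination of pointed cone terms, so the dictionary between cone terms and expressions $\exp(-\Phi(f))/\prod\lambda_+$, and the step ``projection to $\lie{t}^*$ equals setting $\hbar\to0$'' (justified in Theorem \ref{thm:main} by the support lying in a translate of a pointed cone), are unavailable as soon as $\dim F>0$. Your restriction formula is also incorrect: for $M=\CP^1$ and $F=P$, the characteristic cycle of $\CC^\times\subset\CP^1$ is the zero section plus both cotangent fibers, and its class restricted to the fixed point over $0$ is $\lambda+(\hbar-\lambda)=\hbar$, not the single product $\prod_{\lambda\ \mathrm{tangent}}\lambda\cdot\prod_{W'}(\hbar-\lambda)$ you posit; the restriction at a vertex is a \emph{sum} over the several components of $cc$ lying over it, and it is exactly this sum that is divisible by $\hbar$. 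The sentence about ``reinserting the lineality directions from the tangent weights that got killed mod $\hbar$'' is where all the missing content would have to live, and no argument is given there.

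For contrast, the paper's proof avoids localization entirely. It expands $1_{M_F^\circ}$ by inclusion--exclusion into indicators of the \emph{closed} orbit closures $M_{F'}$ with $F'\subseteq F$; by additivity of CSM classes, and because each $M_{F'}$ is smooth and closed so that its characteristic cycle is the single conormal bundle $C_M M_{F'}$, one gets $[cc(M_F^\circ\subseteq M)]=\sum_{F'\subseteq F}[C_M M_{F'}]$, the inclusion--exclusion signs cancelling against the signs in Ginzburg's definition. The projected DH measure of each conormal bundle is Lebesgue measure on an explicit collection of regions of the hyperplane arrangement cutting out $P$ (those meeting $F'$ and lying outside every hyperplane through $F'$, computed inside a small neighborhood $P_+$ of $P$), and summing these regions over $F'\subseteq F$ assembles the $F$-centered cone together with its lineality space. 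If you wish to retain a fixed-point computation, you would have to pass first to this decomposition into conormal bundles of closed smooth subvarieties; localizing $cc(M_F^\circ)$ directly, as you propose, cannot detect the lineality directions.
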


The Brianchon-Gram formula was given a Heckman-like derivation also in
\cite{HK}, through a somewhat technical construction of a function
with one critical point in the interior of each face.

\subsection*{Revisiting $\CP^2$}
Before getting into the proof of theorem \ref{thm:BG},
we look again at the $\CP^2$ example from \S\ref{sec:DH}.
There are seven orbits, where the open orbit
gives the entire plane, the $1$-dimensional orbits give half-planes,
and the fixed points give sectors. (As before, we have attempted to indicate
the $\hbar$ direction out of the plane, using dashed level sets. The first
characteristic cycle has seven components and the next three each have three.)
We exhort the reader to check that the choices of vertices $v \in F$
are immaterial.

\epsfig{file=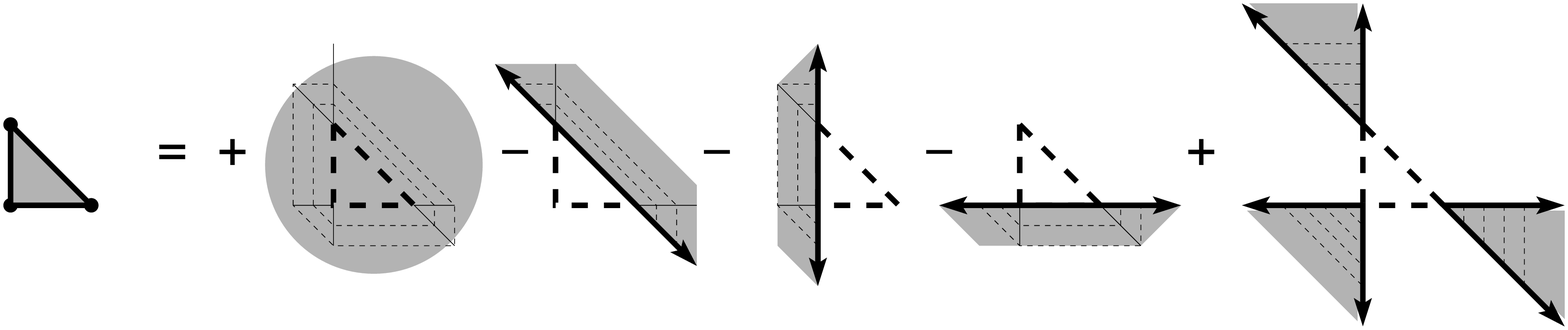,width=6in}

\begin{proof}[Proof of theorem \ref{thm:BG}] 
  For this we use the decomposition of $M$ into its $T^\CC$-orbits.
  For each such orbit $E^\circ \subset M$, with closure we call $E$, 
  observe that $1_{E^\circ} = \sum_{F \subseteq E} (-1)^{\codim_E F}\ 1_F$
  where the sum is over smaller $T^\CC$-orbit closures. Then
  $$ csm(1_{E^\circ}) = \sum_{F \subseteq E} (-1)^{\codim_E F} csm(1_F) $$
  and since $F$ is smooth, and closed in $M$, its characteristic cycle
  is just its conormal bundle $C_M F$. Consequently
  $ [cc(1_{E^\circ})] = \sum_{F \subseteq E} [C_M F] $
  (where the sign we had from inclusion-exclusion cancels with the
  one in Ginzburg's formula for CSM classes). 

  To understand the DH measure associated to this sum, consider the
  (non-central) hyperplane arrangement defining the polytope $P$,
  and many other regions in $\lie{t}^*$. Not every region touches $P$
  (unless $P$ is a product of simplices -- for a first cautionary example,
  consider a trapezoid), so we work in a small open neighborhood $P_+$
  of $P$ to avoid consideration of those other regions. Each hyperplane 
  divides   space into an ``inside'' (where $P$ is) and an ``outside''.
  The moment polytopes of the individual $C_M F$ in the sum, intersected
  with $P_+$, are the exactly the regions that touch $F$ and are on 
  the outside of each of the hyperplanes through $F$. When we add them,
  we get the Brianchon-Gram term associated to $F$.  
\end{proof}

There is another theorem also called Brianchon-Gram, in which the cones
point inward rather than outward, but the total is $(-1)^{\dim P}$
times Lebesgue measure on $P$. One can obtain that from this
by scaling the symplectic form on $M$ by $-1$, which turns $P$ inside out.

The additivity of CSM classes suggests that we wildly generalize to {\em any}
$T$-invariant decomposition of $M$ into locally closed submanifolds. 
We give an example now to demonstrate the dangers.

\subsection{A problematic decomposition}\label{ssec:problematic}
Let $T$ be one-dimensional this time, acting on $V$ with weights $0,1,2$,
and decompose $\PP V$ into the projective point $[0,*,0]$ and the 
open complement $A$. Then $DH_{T \times \CC^\times}(\CP^2,\omega_+)$ 
is a piecewise-linear function times Lebesgue measure on the interval
connecting $(0,0)$ and $(2,0)$. The inclusion $\iota:A\into \CP^2$ is 
perhaps already worrisome in that $R^1\iota_* \neq 0$. Forging ahead, 
we calculate
$DH_{T\times \CC^\times}(cc(S\subseteq M)\subseteq T^*M, \omega_+)$
for $S \in \{ \{[0,*,0]\}, A \}$ and obtain the following picture: \\
\centerline{\epsfig{file=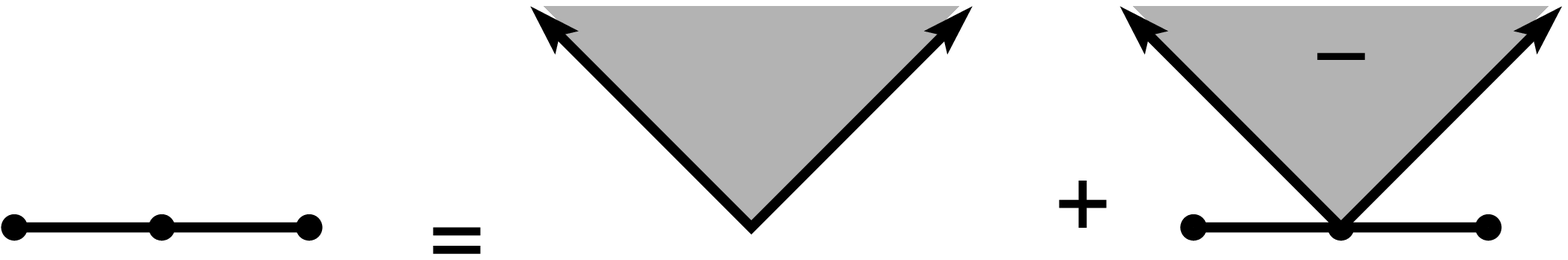,height=.7in}} \\
In the shaded regions we have $\pm\frac{1}{2}$ Lebesgue measure
(where the ``minus'' comes from a contribution from the derived pushforward). 
There is now a serious impropriety if we try to forget the
$\CC^\times$ action, projecting out the vertical direction.

This could be fixed by replacing these measures with Fourier equivalent
ones pointing rightward. Our conjecture within theorem \ref{thm:main}
is that in the case of BB decompositions (which does not include 
this example), that replacement is unnecessary.

\subsection*{A non-Morse decomposition}

Consider the decomposition of $\CP^2 = \{[x,y,z]\}$
into 
$$ \{xyz\neq 0\} \coprod \{x=0,y\neq 0\} \coprod \{y=0,z\neq 0\}
\coprod \{z=0,x\neq 0\}.$$
If we follow the proof of theorem \ref{thm:BG}, 
but use this decomposition, we get the following equality of measures \\
\centerline{\epsfig{file=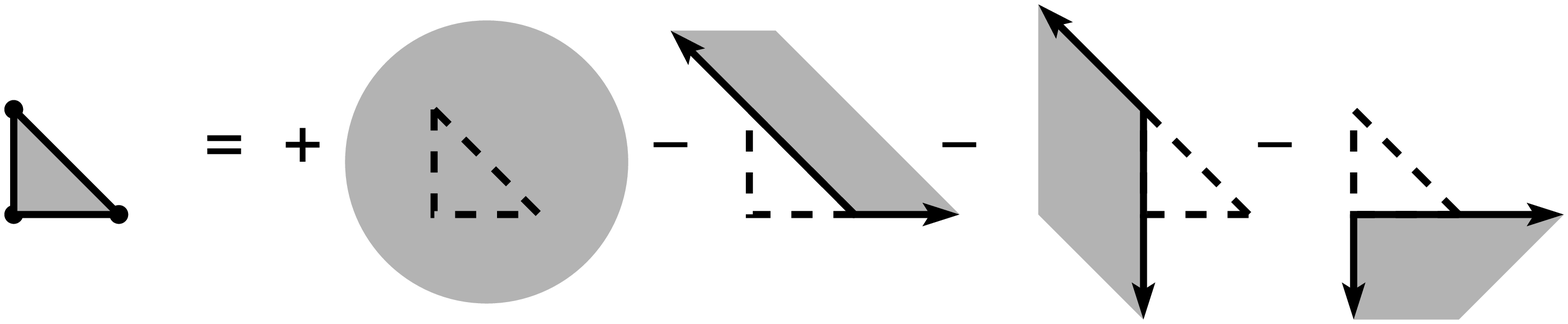,height=1in}} \\
which of course one could obtain by partial cancelation of the
Brianchon-Gram formula we drew after theorem \ref{thm:BG}.

We didn't here discuss nonabelian versions of our results, which
hopefully would allow for a similar geometric interpretation of \cite{Paradan}.
It is worth pointing out that when $T \into G \actson (M,\omega)$, 
it is frequently possible that $G$'s moment map is proper even though
$T$'s isn't (e.g. $T^1 \into SU(2) \actson \CC^2$), so one can't 
obviously derive the nonabelian from the abelian.

We end with a question, another conjecture, and an example. 

\begin{quote}
  Q. Let $M$ be 
  a smooth complex projective variety $M$ with a $T$-action,
  and $A\subseteq M$ a locally closed $T$-invariant
  smooth subvariety. What condition on $A$ guarantees that
  the projection of $supp(DH_{T\times \CC^\times}(cc(A)\subseteq T^*M,\omega_+))$
  to $\lie{t}^*$ is proper?
\end{quote}

There are two issues to be wary of -- higher cohomology involved in
defining $cc(A)$, and improperness of $A$ itself. The following is
an attempt to deal with each of those:

\newcommand\barA{{\overline A}}

\begin{quote}
  {\em Conjecture.} Assume $\barA \setminus A$ (the points added in the
  closure) supports an ample Cartier divisor in $\barA$.
  Assume that there exists an open $T$-invariant subset $U\subseteq CA$ of the 
  conormal bundle to $A$ such that $U/T$ is a proper scheme. 
  Then the projection in the question above is proper.
\end{quote}

We describe an example that would be covered by this conjecture. 
Consider the flag manifold $GL(3)/B$, its divisor 
$X^{231} := \overline{B r_1 r_2 B}/B$, and its rotations $c X^{231}, c^2 X^{231}$
where $c = \begin{bmatrix}  0& 1 & 0 \\ 0 &0& 1\\ 1&0&0 \end{bmatrix}$.
When we intersect those divisors pairwise, we get three $\PP^1$s.
Each stratum (the open stratum, three $\PP^1 \times \Gm$, three $\PP^1$)
has trivial normal bundle.
The analogue of theorem \ref{thm:BG} for this example,
computing the usual piecewise-linear measure on the hexagon,
looks as follows:

\epsfig{file=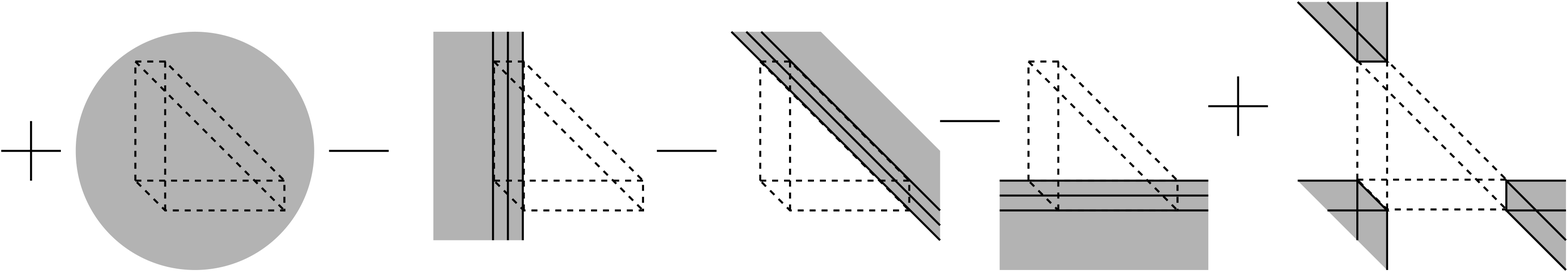,width=6in}

$$
\begin{matrix}
  +&
  \begin{matrix}    \text{a multiple of} \\    \text{Lebesgue measure} \\
    \text{on the whole plane} \\    \text{from open stratum}
  \end{matrix}
  &-&
  \begin{matrix}
    \text{a piecewise} \\
    \text{linear measure} \\
    \text{on a half-plane} \\
    \text{from a divisor}
  \end{matrix}
  &-&
  \begin{matrix}
    \text{another}
  \end{matrix}
  &-&
  \begin{matrix}
    \text{another } 
  \end{matrix}   +&
  \begin{matrix}
    \text{a piecewise} \\
    \text{linear measure} \\
    \text{on three half-planes} \\    \text{from three curves}
  \end{matrix} 
\end{matrix}
$$
This gives a manifestly $S_3$-invariant formula for the measure.

\bibliographystyle{alpha}    

\end{document}